\newcommand{\bburl}[1]{\textcolor{blue}{\url{#1}}}
\newcommand{\be}{\begin{equation}}
\newcommand{\ee}{\end{equation}}
\newcommand{\bea}{\begin{eqnarray}}
\newcommand{\eea}{\end{eqnarray}}
\newtheorem{thm}{Theorem}[section]
\newtheorem{cor}[thm]{Corollary}
\newtheorem{lem}[thm]{Lemma}
\newtheorem{defi}[thm]{Definition}
\newtheorem{rek}[thm]{Remark}
\numberwithin{equation}{section}
\begin{document}

\title{Difference in the Number of Summands in the Zeckendorf Partitions of Consecutive Integers}

\author{H\`ung Vi\d{\^e}t Chu}
\email{\textcolor{blue}{\href{mailto:hungchu2@illinois.edu}{hungchu2@illinois.edu}}}
\address{Department of Mathematics, University of Illinois at Urbana-Champaign, Urbana, IL 61820, USA}

\subjclass[2010]{11B39}

\keywords{Zeckendorf partition, fixed term}

\thanks{}

\date{\today}
\maketitle

\begin{abstract}
Zeckendorf proved that every positive integer has a unique partition as a sum of non-consecutive Fibonacci numbers. We study the difference between the number of summands in the partition of two consecutive integers. In particular, let $L(n)$ be the number of summands in the partition of $n$. We characterize all positive integers such that $L(n) > L(n+1)$, $L(n) < L(n+1)$, and $L(n) = L(n+1)$. Furthermore, we call $n+1$ a \textit{peak} of $L$ if $L(n) < L(n+1) > L(n+2)$ and a \textit{divot} of $L$ if $L(n) > L(n+1) < L(n+2)$. We characterize all such peaks and divots of $L$. 
\end{abstract}

\section{Introduction}
The Fibonacci sequence $\left\{F_n\right\}_{n=0}^{\infty}$ is defined as follows: let $F_0 = 0$, $F_1 = 1$, and $F_n = F_{n-1} + F_{n-2}$, for $n\ge 2$.
A beautiful theorem of Zeckendorf \cite{Z} states that every positive integer $n$ can be uniquely written as a sum of non-consecutive Fibonacci numbers. This gives the so-called Zeckendorf partition of $n$. A formal statement of Zeckendorf's theorem is as follows:
\begin{thm}[Zeckendorf]\label{p1}
For any $n\in\mathbb{N}$, there exists a unique increasing sequence of positive integers $(c_1, c_2, \ldots, c_k)$ such that $c_1\ge 2$, $c_i\ge c_{i-1}+2$ for $i = 2, 3, \ldots, k$, and $n = \sum_{i=1}^kF_{c_i}$. 
\end{thm}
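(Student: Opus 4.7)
The plan is to prove existence and uniqueness separately, both by strong induction on $n$, with a greedy algorithm driving existence and a sharp upper bound on sums of non-consecutive Fibonacci numbers driving uniqueness.

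For existence, I would induct on $n$. The base case $n=1=F_2$ is immediate. For the inductive step with $n \geq 2$, I would let $k$ be the largest integer with $F_k \leq n$ and examine the remainder $r := n - F_k$. The key observation is that $F_{k+1} > n$ forces
\[
r \;<\; F_{k+1}-F_k \;=\; F_{k-1}.
\]
If $r=0$ we are done; otherwise the inductive hypothesis gives a Zeckendorf representation of $r$, and because $r < F_{k-1}$ every index appearing in that representation is at most $k-2$. Adjoining $F_k$ therefore yields a valid representation of $n$ with the required non-consecutive condition $c_i \geq c_{i-1}+2$.

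For uniqueness, the key auxiliary lemma to establish first is that
\[
\sum_{i \in S} F_i \;\leq\; F_{k+1} - 1
\]
for any set $S$ of non-consecutive integers contained in $\{2, 3, \ldots, k\}$. This follows from the telescoping identities $F_2+F_4+\cdots+F_{2m}=F_{2m+1}-1$ and $F_3+F_5+\cdots+F_{2m+1}=F_{2m+2}-1$, each proved by a one-line induction using $F_{j+1}=F_j+F_{j-1}$; the lemma then comes by comparing any admissible $S$ to the extremal choice $\{k, k-2, k-4, \ldots\}$. Given two Zeckendorf representations of the same $n$, I would cancel all shared summands and look at the largest index $a$ still present, in (say) the first representation. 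The first representation then contributes at least $F_a$, whereas the second, having all indices at most $a-1$ and non-consecutive, contributes at most $F_a - 1$ by the lemma. This contradicts equality of the two sums, and hence the two representations must coincide.

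The main obstacle is the small piece of bookkeeping in both halves: in existence, verifying that the greedy choice $F_k$ does not force consecutiveness with any summand of $n-F_k$, and in uniqueness, correctly reducing to a one-sided comparison of the largest index after cancellation. Both reductions rest on the sharp inequality $n - F_k < F_{k-1}$ and the maximal-sum bound above; once these are isolated, the rest of the argument is essentially routine.
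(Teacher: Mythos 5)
The paper does not prove this theorem: it is Zeckendorf's classical result, stated with a citation to \cite{Z} and used as a black box, so there is no internal proof to compare against. Your argument is the standard and correct one: greedy existence via the bound $n - F_k < F_{k-1}$ (which indeed forces every index in the representation of the remainder to be at most $k-2$, preserving non-consecutiveness), and uniqueness via the sharp estimate $\sum_{i\in S}F_i \le F_{k+1}-1$ for non-consecutive $S\subseteq\{2,\dots,k\}$, followed by cancellation of common summands and comparison at the largest surviving index. The only point I would tighten is the justification of that estimate: rather than appealing informally to the ``extremal choice'' $\{k,k-2,\dots\}$, prove it by induction on $k$ --- if $k\notin S$ the bound follows from the case $k-1$; if $k\in S$ then $k-1\notin S$, so $\sum_{i\in S}F_i \le F_k + (F_{k-1}-1) = F_{k+1}-1$. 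With that one-line induction in place, both halves of your proof are complete and rigorous.
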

Much work has been done to understand the structure of Zeckendorf partitions and their applications (see \cite{BDEMMTW1, BDEMMTW2, B, CSH, Fr, MG1, MG2, HS, L, MW1, MW2}) and to generalize them (see \cite{Day, DDKMMV, DFFHMPP, FGNPT, GTNP, Ho, K, ML, MMMS, MMMMS}). Let $\phi = \frac{1+\sqrt{5}}{2}$, the golden ratio. Many properties of the number of summands in Zeckendorf partitions have been discovered. For example, Lekkerkerker \cite{L} proved that the average number of summands needed for $x\in [F_n,F_{n+1})$ as $n \rightarrow \infty$ is $n/(\phi^2+1)$. Furthermore, Kologlu et al., \cite{KKMW} investigated the distribution of the number of summands and showed that the distribution converges to a Gaussian as $n \rightarrow\infty$. Continuing these work, we study the difference in the number of summands of consecutive integers.

Let $P(n)$ be the set of Fibonacci numbers in the Zeckendorf partition of $n$ and $L(n)$ be the number of summands in the Zeckendorf partition of $n$; that is, $L(n) := \#P(n)$. Define the arithmetic function $f(n) := L(n+1) - L(n)$, which measures the difference between numbers of summands of two consecutive integers. Our main result characterizes all $n$ such that $f(n) > 0$, $f(n) = 0$, and $f(n) < 0$. The following data is generated by a {\tt Python} program.

\begin{center}
    \begin{tabular}{|c|c|c|c|}
    \hline
      $N$ & $f(n) > 0$ & $f(n) < 0$ & $f(n) = 0$ \\
      \hline
      10 & 3 & 2 & 5  \\ \hline
      100 & 38 & 23 & 39 \\ \hline
      1,000 & 382 & 236 & 382 \\ \hline 
      10,000 & 3819 & 2360 & 3820 \\ \hline
      $10^5$ & 38196 & 23606 & 38197\\ \hline
      $10^6$ & 381966 & 236068 & 381966\\ \hline
    \end{tabular}
\end{center}

\begin{center}
Table 1. Number of positive integers at most $N$ such that $f(n) > 0$, $f(n) < 0$, and $f(n) = 0$, respectively. 
\end{center}

Table 1 indicates that the number of positive integers $n$ with $f(n) > 0$ and $f(n) = 0$ are extremely close, while the number of $n$ with $f(n) < 0$ is smaller. The data suggest that as $N\rightarrow \infty$, the probability that a positive integer at most $N$ satisfying $f(n) < 0$ is equal to the probability that $f(n) = 0$, which is greater than the probability that $f(n) < 0$. As a consequence, the number of summands in the Zeckendorf partition of $n+1$ is more likely to be equal to or greater than the number of summands in the partition of $n$. We state our first result.

\begin{thm} \label{maintheo} For positive integers $n$, we have $L(n) < L(n+1)$ if and only if \begin{align}\label{f1}S_1\ : = \ \left\{\left\lfloor{\frac{n+1}{\phi}}\right\rfloor+2n\,:\, n\ge 1\right\}.\end{align}
We have $L(n) > L(n+1)$ if and only if \begin{align}\label{f2}S_2\ :=\ \left\{2\left\lfloor\frac{n+1}{\phi}\right\rfloor+3n-1\, :\, n\ge 1\right\}.\end{align}
\end{thm}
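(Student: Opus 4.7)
The plan is to first carry out a structural case analysis of how the Zeckendorf partition changes under $n \mapsto n+1$, and then realize each of $S_1$ and $S_2$ as the increasing enumeration of a set described by a local condition on the smallest indices of $P(n)$. Writing $P(n) = \{F_{c_1},\ldots,F_{c_k}\}$ with $c_1 < \cdots < c_k$, adding $F_2 = 1$ either fits cleanly (when $c_1 \geq 4$) or triggers a Fibonacci carry-cascade via $F_2+F_2 = F_3$, $F_2+F_3 = F_4$, and $F_i+F_{i+1} = F_{i+2}$. A routine case check on $(c_1(n), c_2(n))$ yields: $L(n+1) > L(n)$ iff $c_1(n) \geq 4$; $L(n+1) < L(n)$ iff $c_1(n) \in \{2,3\}$ and $c_2(n) = c_1(n)+2$; and $L(n+1) = L(n)$ otherwise. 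The theorem thus reduces to identifying the two sets
\[
T := \{n : c_1(n) \geq 4\}, \qquad U := \{n : c_1(n) \in \{2,3\},\ c_2(n) = c_1(n)+2\},
\]
enumerated in increasing order, with $S_1$ and $S_2$ respectively.

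The set $T$ is in order-preserving bijection with $\N$ via the Zeckendorf index-shift-up-by-$2$: if $m = \sum_i F_{d_i}$ in Zeckendorf, then the $m$-th element $t_m$ of $T$ is $\sum_i F_{d_i+2}$. The identity $F_{d+2} = 2F_d + F_{d-1}$ then gives $t_m - 2m = \sum_i F_{d_i - 1}$, so the claim $T = S_1$ reduces to proving $\sum_i F_{d_i - 1} = \lfloor (m+1)/\phi\rfloor$. I plan to prove this via Binet's formula $F_k = (\phi^k - \psi^k)/\sqrt{5}$ with $\psi = -1/\phi$, which gives $\sum_i F_{d_i-1} = m/\phi + E$ where $E := \sum_i \psi^{d_i}$. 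Zeckendorf nonconsecutivity of the $d_i$ confines $E$ to the interval $(-1/\phi^2, 1/\phi)$; combined with the algebraic identities $\phi - 1/\phi = 1$ and $\phi + 1/\phi^2 = 2$, this forces $\phi - E \in (1,2)$, hence $\{(m+1)\phi\} = \phi - E - 1$. Using $\{(m+1)/\phi\} = \{(m+1)\phi\}$, this rearranges to $E = 1/\phi - \{(m+1)/\phi\}$, which yields $\sum_i F_{d_i-1} = \lfloor (m+1)/\phi\rfloor$. Thus $t_m = \lfloor (m+1)/\phi\rfloor + 2m$, establishing $T = S_1$.

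For $S_2$ I proceed in two steps. First, the same cascade analysis shows that $u \in U$ implies $c_1(u+1) \geq 5$; conversely, using the explicit Zeckendorf expansion of $F_k - 1$ for $k \geq 5$ (an alternating-parity sum of smaller Fibonacci numbers), every $n \in T' := \{n : c_1(n) \geq 5\}$ satisfies $n - 1 \in U$. Hence $u \mapsto u+1$ is an order-preserving bijection from $U$ onto $T'$. Second, Zeckendorf index-shifts are order-preserving, so the $m$-th element $t'_m$ of $T'$ equals $\sum_i F_{d_i + 3}$ with $m = \sum_i F_{d_i}$; the Fibonacci identity $F_{k+3} = F_{k+2} + F_{k+1}$ yields the clean decomposition $t'_m = t_m + \tau_m$, where $\tau_m$ denotes the $m$-th element of $W := \{n : c_1(n) \geq 3\}$. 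Running the Binet argument of the previous paragraph with $W$ in place of $T$ gives $\tau_m = \lfloor (m+1)/\phi\rfloor + m$, and therefore
\[
u_m = t'_m - 1 = t_m + \tau_m - 1 = 2\lfloor (m+1)/\phi\rfloor + 3m - 1,
\]
matching $S_2$. The main obstacle is the Binet-based identity $\sum_i F_{d_i-1} = \lfloor (m+1)/\phi\rfloor$: it requires the tight two-sided bound on the error $E$ coming from Zeckendorf nonconsecutivity, plus a small case analysis on the parities of the $d_i$ to pin down the sign of $E$; everything else is routine Fibonacci-algebra once this identity is in hand.
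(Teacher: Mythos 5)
Your proposal is correct, and it reaches the theorem by a genuinely different route from the paper. Your local classification of the carry cascade --- $L(n+1)>L(n)$ iff $c_1(n)\ge 4$, and $L(n+1)<L(n)$ iff $c_1(n)\in\{2,3\}$ with $c_2(n)=c_1(n)+2$ --- is exactly equivalent to the paper's Lemmas \ref{m1} and \ref{m2}, which phrase the same conditions in terms of $P(n+1)$ (namely $F_2\in P(n+1)$, respectively $P(n+1)\cap\{F_2,F_3,F_4\}=\emptyset$); likewise your observation that $U+1=\{n:c_1(n)\ge 5\}$ is the paper's Lemma \ref{m3} shifted by one rather than three. The real divergence is in how the local conditions are converted into the Beatty-type formulas: the paper simply quotes Griffiths' fixed-term enumeration \eqref{k1} and \eqref{k2} from \cite{MG1} and specializes $k=2$ and $k=3$, whereas you re-derive the enumeration from scratch via the index-shift bijection $m=\sum_i F_{d_i}\mapsto \sum_i F_{d_i+j}$, the telescoping identity $t_m-2m=\sum_i F_{d_i-1}$, and the Binet estimate showing $\sum_i F_{d_i-1}=\lfloor (m+1)/\phi\rfloor$ from the bound $\sum_i\psi^{d_i}\in(-1/\phi^2,1/\phi)$ with $\psi=-1/\phi$; your decomposition $t'_m=t_m+\tau_m$ for the shift-by-three set is a clean way to get the coefficient pattern $2\lfloor\cdot\rfloor+3m$. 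What your approach buys is self-containedness --- the paper outsources all of the counting to \cite{MG1} --- at the cost of two assertions you should actually justify in a written-up version: that the index-shift map is an order-preserving bijection of $\mathbb{N}$ onto $\{n:c_1(n)\ge j+2\}$ (this follows from uniqueness of Zeckendorf partitions together with the fact that they compare lexicographically from the largest index down, but it is the load-bearing step and deserves a proof), and the routine but multi-case verification of the carry-cascade classification.
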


\begin{cor}\label{maincor}
For an arbitrary positive integer $n$, the probability that 
 \begin{enumerate}
\item  $f(n) < 0$ is $\frac{\phi}{3\phi+2} = 0.2360\ldots$,
\item $f(n) = 0$ is $\frac{\phi}{1+2\phi} = 0.38196\ldots$,
\item $f(n) > 0$ is $\frac{\phi}{1+ 2\phi} = 0.38196\ldots$.
 \end{enumerate}
\end{cor}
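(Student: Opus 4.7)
The plan is to reduce Corollary~\ref{maincor} to a natural density computation via Theorem~\ref{maintheo}, which identifies $\{n\in\N : f(n) > 0\}$ with $S_1$ and $\{n\in\N : f(n) < 0\}$ with $S_2$. These two sets are disjoint, since a given value $f(n)$ cannot be both positive and negative, so the three events partition $\N$; it therefore suffices to compute the natural densities of $S_1$ and $S_2$, and the density of $\{n : f(n)=0\}$ will be the complement.

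First I would verify that the formulas in Theorem~\ref{maintheo} enumerate $S_1$ and $S_2$ in strictly increasing order. Setting $a_k := \lfloor (k+1)/\phi\rfloor + 2k$ and $b_k := 2\lfloor (k+1)/\phi\rfloor + 3k - 1$, the increments $a_{k+1}-a_k$ and $b_{k+1}-b_k$ are bounded below by $2$ and $3$, respectively, because $\lfloor (k+1)/\phi\rfloor$ is nondecreasing in $k$. Consequently $\#\{m\in S_1 : m\le N\} = \max\{k : a_k\le N\}$ and similarly for $S_2$, so the natural density of $S_1$ equals $\lim_{k\to\infty} k/a_k$, and analogously for $S_2$.

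Next I would extract the asymptotics using $\lfloor (k+1)/\phi\rfloor = k/\phi + O(1)$ together with $1/\phi = \phi - 1$:
\[ a_k \ = \ \bigl(\tfrac{1}{\phi}+2\bigr)k + O(1) \ = \ (\phi+1)\,k + O(1) \ = \ \phi^2\, k + O(1), \qquad b_k \ = \ \bigl(\tfrac{2}{\phi}+3\bigr)k + O(1) \ = \ \phi^3\, k + O(1). \]
Hence $S_1$ has density $1/\phi^2$ and $S_2$ has density $1/\phi^3$. Iterating $\phi^2 = \phi + 1$ gives $\phi^3 = 2\phi + 1$ and $\phi^4 = 3\phi + 2$, so $1/\phi^2 = \phi/(1+2\phi)$ and $1/\phi^3 = \phi/(3\phi+2)$, which are parts (3) and (1). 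For part (2) I compute
\[ 1 \ - \ \tfrac{1}{\phi^2} \ - \ \tfrac{1}{\phi^3} \ = \ 1 \ - \ \tfrac{\phi+1}{\phi^3} \ = \ 1 \ - \ \tfrac{1}{\phi} \ = \ \tfrac{1}{\phi^2} \ = \ \tfrac{\phi}{1+2\phi}. \]

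I do not anticipate a genuine obstacle: granted Theorem~\ref{maintheo}, the argument is an elementary density count. The only care required is to track the $O(1)$ error from the floor function when passing to the density limit, and to invoke the disjointness of $S_1$ and $S_2$ so that the three computed densities sum to $1$.
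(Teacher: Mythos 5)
Your argument is correct and follows essentially the same route as the paper: identify the sets via Theorem~\ref{maintheo}, count elements up to $N$ using the linear growth of the enumerating formulas, and obtain the $f(n)=0$ density by complementation. The only difference is that you make explicit the monotonicity of the enumerations and the disjointness of $S_1$ and $S_2$, which the paper leaves implicit.
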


\begin{proof}
Let $N\in \mathbb{N}$. Due to \eqref{f1}, the number of positive integers that are at most $N$ and in $S_1$ is 
$$\frac{N}{2+1/\phi} + O(1).$$
These are also the positive integers $n\le N$ with $f(n)>0$. Hence, the probability that $f(n) > 0$ is 
$$\lim_{N\rightarrow \infty}\frac{1}{N}\left(\frac{N}{2+ 1/\phi}+O(1)\right) \ =\  \frac{\phi}{1+ 2\phi}.$$
Using \eqref{f2}, we can compute the probability that $f(n) < 0$ is 
$$\lim_{N\rightarrow \infty}\frac{1}{N}\left(\frac{N}{2/\phi+3}+O(1)\right) \ =\ \frac{\phi}{3\phi+2}.$$
Finally, the probability that $f(n) = 0$ is 
$$1 - \frac{\phi}{1+ 2\phi} - \frac{\phi}{3\phi+2} \ =\ \frac{\phi}{1+2\phi}.$$
This completes our proof.
\end{proof}

A natural question is how large can $|f(n)|$ be. The following theorem provides the answer. 

\begin{thm}\label{minortheo1} Let $n$ be a positive integer. If $f(n) > 0$, then $f(n) = 1$. If $f(n) < 0$, then $f(n)$ can be equal to any negative integers. 
\end{thm}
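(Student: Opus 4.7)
My plan is to directly trace how the Zeckendorf partition of $n+1$ arises from that of $n$ by adjoining $F_2 = 1$ and performing any resulting carries. Write $P(n) = \{F_{c_1}, F_{c_2}, \ldots, F_{c_k}\}$ with $2 \le c_1 < c_2 < \cdots < c_k$ and $c_{i+1} - c_i \ge 2$. If $c_1 \ge 4$, then $\{F_2\} \cup P(n)$ is already a valid Zeckendorf partition of $n+1$, so $L(n+1) = L(n) + 1$ and $f(n) = 1$. Every $n$ with $f(n) > 0$ must fall in this regime, which already yields the first claim of the theorem: $f(n) > 0 \Rightarrow f(n) = 1$.

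In the remaining case $c_1 \in \{2,3\}$, I would define the \emph{initial run length} $j$ as the largest integer such that $c_i = c_1 + 2(i-1)$ for all $1 \le i \le j$; by maximality either $j = k$ or $c_{j+1} \ge c_j + 3$. A short induction on $j$, using $2F_2 = F_3$ (when $c_1 = 2$) or $F_2 + F_3 = F_4$ (when $c_1 = 3$) together with the identity $F_i + F_{i+1} = F_{i+2}$, shows that the adjoined $F_2$ together with the first $j$ terms of $P(n)$ collapses to the single Fibonacci number $F_{c_j + 1}$. The gap $c_{j+1} \ge c_j + 3 = (c_j+1) + 2$ (or $j = k$) guarantees that this output term is non-consecutive with any surviving $F_{c_{j+1}}$, so the result is the Zeckendorf partition of $n+1$, with $L(n+1) = L(n) - j + 1$ and hence $f(n) = 1 - j \le 0$.

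For the second statement, the cascade analysis furnishes an explicit family: for any $m \ge 1$, set
\[
n \;:=\; F_2 + F_4 + \cdots + F_{2(m+1)}.
\]
Then $c_1 = 2$ and the initial run has length $j = m+1$ (trivially, since $j = k$), so the preceding computation gives $f(n) = 1 - (m+1) = -m$. Thus every negative integer is attained, completing the proof.

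The main bookkeeping hurdle I expect is making the cascade induction clean: one has to verify simultaneously that after $j$ carry steps the accumulated term is precisely $F_{c_j + 1}$ and that the gap condition prevents a further carry into $F_{c_{j+1}}$. Once the base cases $(c_1,j) = (2,1)$ and $(3,1)$ are handled via $2F_2 = F_3$ and $F_2 + F_3 = F_4$ respectively, the inductive step is a single application of $F_i + F_{i+1} = F_{i+2}$; everything else in the argument is routine.
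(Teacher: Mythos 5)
Your proposal is correct. It differs from the paper's argument mainly in organization: the paper derives the first claim as a quick corollary of its Lemma 2.2 (namely, $f(n)>0$ forces $F_2\in P(n+1)$, and deleting that $F_2$ yields the partition of $n$, so $f(n)=1$), whereas you prove a self-contained and strictly stronger statement, the exact formula $f(n)=1$ when the smallest index $c_1$ in $P(n)$ is at least $4$ and $f(n)=1-j$ when $c_1\in\{2,3\}$ with $j$ the length of the initial run of gap-$2$ indices. The carry-cascade mechanism you use ($F_2+F_2=F_3$, $F_2+F_3=F_4$, then repeated $F_i+F_{i+1}=F_{i+2}$, with the maximality gap $c_{j+1}\ge c_j+3$ ensuring the cascade terminates in a valid Zeckendorf partition) is exactly the idea underlying the paper's lemmas, so nothing essentially new is needed, but your version delivers the full distribution of values of $f$ in one pass rather than only its sign. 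For the second claim the two proofs coincide: your family $n=F_2+F_4+\cdots+F_{2(m+1)}$ is the paper's $n=F_{2k+1}-1$ with $k=m+1$, giving $f(n)=1-k$. One small presentational remark: you assert ``every $n$ with $f(n)>0$ must fall in this regime'' before you have shown that the $c_1\in\{2,3\}$ cases give $f(n)\le 0$; logically the cascade computation should come first, but the completed argument does close this loop.
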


Before stating our final result. We introduce a definition. 
\begin{defi}\normalfont
Given an arithmetic function $L$, if $L(n) > L(n+1) < L(n+2)$, then we call $n+1$ a \textit{divot} of the function $L$. If $L(n) < L(n+1) > L(n+2)$, then we call $n+1$ a \textit{peak} of the function $L$. 
\end{defi}

\begin{thm}\label{maintheo3} The following are true about $L$. 
\begin{enumerate}
\item There is no positive integer $n$ such that either $L(n) < L(n+1) < L(n+2)$ or $L(n) > L(n+1) > L(n+2)$. 
\item The set of positive integers $n$ such that $n+1$ is a divot is $S_2$. 
\item The set of positive integers $n$ such that $n+1$ is a peak is $$S_3 := \left\{3\left\lfloor\frac{n}{\phi}+\phi\right\rfloor + 5n\, :\, n\ge 0\right\}.$$
\end{enumerate}
\end{thm}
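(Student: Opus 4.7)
The plan is to translate peak and divot conditions into set-membership in $S_1$ and $S_2$ via Theorem~\ref{maintheo}, then resolve everything with floor/fractional-part identities for $\phi$. Write $s_m := \lfloor (m+1)/\phi \rfloor + 2m$ and $t_m := 2\lfloor (m+1)/\phi \rfloor + 3m - 1$ for the $m$th elements of $S_1$ and $S_2$. Part~(1) is immediate from the consecutive differences
\begin{align*}
s_{m+1} - s_m &\;=\; 2 + \bigl(\lfloor (m+2)/\phi \rfloor - \lfloor (m+1)/\phi \rfloor\bigr) \in \{2,3\},\\
t_{m+1} - t_m &\;=\; 3 + 2\bigl(\lfloor (m+2)/\phi \rfloor - \lfloor (m+1)/\phi \rfloor\bigr) \in \{3,5\},
\end{align*}
which show that neither $S_1$ nor $S_2$ contains two consecutive integers, ruling out $L(n)<L(n+1)<L(n+2)$ and $L(n)>L(n+1)>L(n+2)$.

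For part~(2), a divot at $n+1$ is equivalent to $n \in S_2$ and $n+1 \in S_1$; the first is automatic, so the content is the implication $n \in S_2 \Rightarrow n+1 \in S_1$. Given $n = t_m$, I take $k := m + \lfloor (m+1)/\phi \rfloor$ and claim $s_k = t_m + 1$. Setting $\epsilon := \{(m+1)/\phi\} \in (0,1)$ and using $1/\phi + 1/\phi^2 = 1$, one computes
\begin{equation*}
(k+1)/\phi \;=\; (m+1)(1/\phi + 1/\phi^2) - \epsilon/\phi \;=\; m+1-\epsilon/\phi \;\in\; (m,\,m+1),
\end{equation*}
so $\lfloor (k+1)/\phi \rfloor = m$, whence $s_k = m + 2k = 2\lfloor (m+1)/\phi \rfloor + 3m = t_m + 1$.

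For part~(3), a peak at $n+1$ means $n \in S_1$ and $n+1 \in S_2$, so I must characterize the $k$ for which $t_k - 1 \in S_1$. Writing $j := \lfloor (k+1)/\phi \rfloor$, the equation $s_m = t_k - 1$ forces $m = k+j-1$ and reduces to $\lfloor (k+j)/\phi \rfloor = k$, equivalently $k/\phi \le j < k/\phi + \phi$. Writing $k/\phi = K + r$ with $r = \{k/\phi\}$, one has $j \in \{K, K+1\}$ according as $r < 1/\phi^2$ or $r \ge 1/\phi^2$, and a short case check shows the peak condition holds exactly when $j = K+1$, i.e., $\{k/\phi\} \ge 1/\phi^2$. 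Since $\{k/\phi\} = \{k\phi\}$ (because $k/\phi = k\phi - k$), peak $k$'s are exactly those with $\{k\phi\} \ge 1/\phi^2$.

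To identify these with the Beatty sequence $\{\lfloor p\phi \rfloor : p \ge 1\}$, I compute $\{k\phi\}$ in each Beatty case using $\phi^2 = \phi+1$ and $\phi^3 = 2\phi+1$: for $k = \lfloor p\phi \rfloor$ with $\delta := \{p\phi\}$, expansion yields $k\phi \equiv -\delta/\phi \pmod{1}$, hence $\{k\phi\} = 1 - \delta/\phi \in (1/\phi^2, 1)$; for $k = \lfloor p\phi^2 \rfloor$ with $\gamma := \{p\phi\}$, expansion yields $k\phi \equiv \gamma(2-\phi) \pmod{1}$, hence $\{k\phi\} = \gamma(2-\phi) \in (0, 1/\phi^2)$. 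By Beatty's theorem these two cases partition $\mathbb{Z}_{\ge 1}$, so peak indices are exactly $k(n) := \lfloor (n+1)\phi \rfloor$ for $n \ge 0$. Repeating the part~(2) calculation shows $\lfloor (k(n)+1)/\phi \rfloor = n+1$, so the peak-predecessor is $t_{k(n)} - 1 = 3\lfloor (n+1)\phi \rfloor + 2n$. Finally, $\lfloor n/\phi + \phi \rfloor = \lfloor (n+1)\phi \rfloor - n$ (from $n/\phi + \phi = (n+1)\phi - n$) rewrites this as $3\lfloor n/\phi + \phi \rfloor + 5n$, matching the definition of $S_3$. The main obstacle is the Beatty classification of $\{k\phi\}$; once proved, the rest is direct algebra.
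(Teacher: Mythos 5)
Your argument is correct, but it takes a genuinely different route from the paper's. The paper works combinatorially with the Zeckendorf partitions themselves: Lemmas \ref{m1} and \ref{m2} translate the monotonicity conditions into statements about whether $F_2,F_3,F_4$ occur in $P(n+1)$ and $P(n+2)$; parts (1) and (2) then follow by adding $1$ to a partition and tracking the carries; and part (3) is reduced via Lemma \ref{m10} to the single condition $\{F_2,F_4\}\subset P(n+1)$, at which point Griffiths' formula \eqref{k2} with $k=2$ hands over the closed form $S_3$ with no further computation. You instead take the Beatty-type descriptions of $S_1$ and $S_2$ from Theorem \ref{maintheo} as the sole input and do everything by floor/fractional-part arithmetic: gap estimates ($\{2,3\}$ and $\{3,5\}$) for part (1), an explicit index match $s_{m+\lfloor(m+1)/\phi\rfloor}=t_m+1$ for part (2), and for part (3) a classification of $\{k\phi\}$ along the two complementary Beatty sequences $\lfloor p\phi\rfloor$ and $\lfloor p\phi^2\rfloor$. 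Your route is self-contained modulo Theorem \ref{maintheo} and Beatty's theorem and never touches \eqref{k2}, but pays for this with a considerably more delicate computation in part (3); the paper's route exposes the combinatorial meaning of peaks and divots (which small Fibonacci summands $n+1$ must contain) and obtains $S_3$ essentially for free. One step worth tightening: the claim that $s_m=t_k-1$ ``forces'' $m=k+j-1$ should be justified --- since $s_m$ is strictly increasing there is at most one solution, and because $\lfloor(m+1)/\phi\rfloor$ lies within distance $1$ of $(m+1)/\phi$, the equation $\lfloor(m+1)/\phi\rfloor+2m=2j+3k-2$ confines $(2+1/\phi)m$ to an interval of length $1$, which pins down the unique candidate $m$.
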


\section{Proofs}
We restate \cite[Theorem 3.4]{MG1}, which is a key ingredient in our proof. 
\begin{thm}\label{MGtheorem}
For $k\ge 2$, the set of all positive integers having the summand $F_k$ in their Zeckendorf partition is given by
\begin{align}\label{k1}
    Z(k) \ =\ \left\{F_k\left\lfloor\frac{n+\phi^2}{\phi}\right\rfloor+nF_{k+1}+j\, :\, 0\le j\le F_{k-1}-1, n\ge 0\right\}.
\end{align}
\end{thm}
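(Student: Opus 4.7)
The plan is to organize the Zeckendorf partition of each $N \in Z(k)$ as $N = U + F_k + j$, where $U$ collects the summands $F_m$ with $m \ge k+2$ (the \emph{upper block}, possibly empty) and $j$ is the integer represented by the summands with $m \le k-2$ (the \emph{lower block}, possibly empty). Uniqueness in Zeckendorf's theorem makes this a bijection between $Z(k)$ and pairs $(U, j)$ of independently valid Zeckendorf sums drawn from the two disjoint index ranges. The admissible lower blocks are exactly $\{0, 1, \ldots, F_{k-1}-1\}$: every integer in that range has a unique Zeckendorf partition using only indices $\le k-2$, and $F_{k-1} - 1$ is the maximum such sum by the standard telescoping identity.

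The valid upper blocks are bijectively parameterized by $n \ge 0$: if $n = \sum_i F_{d_i}$ is the Zeckendorf partition of $n$ (with $d_1 \ge 2$ and $d_{i+1} \ge d_i + 2$), then the associated upper block is $U_n := \sum_i F_{d_i + k}$, since the shifted indices preserve the gap condition and satisfy $d_i + k \ge k+2$, and conversely every valid upper block arises this way exactly once by Zeckendorf's theorem. Applying the Fibonacci addition identity $F_{d+k} = F_k\,F_{d-1} + F_{k+1}\,F_d$ (valid for $d \ge 1$) termwise gives
\[
U_n \ =\ F_k\,T(n) + F_{k+1}\,n, \qquad T(n)\ :=\ \sum_i F_{d_i - 1},
\]
so that $N = F_k\bigl(T(n) + 1\bigr) + n\,F_{k+1} + j$. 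Comparing with \eqref{k1} reduces the theorem to the single identity $T(n) + 1 = \lfloor (n+\phi^2)/\phi\rfloor$.

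Since $\phi^2 = \phi + 1$ and $1/\phi = \phi - 1$, I rewrite $\lfloor (n+\phi^2)/\phi\rfloor = \lfloor n/\phi + \phi\rfloor = \lfloor (n+1)/\phi\rfloor + 1$, reducing the task to $T(n) = \lfloor (n+1)/\phi\rfloor$. Binet's formula $F_d = (\phi^d - \psi^d)/\sqrt{5}$ with $\psi = -1/\phi$ gives $F_{d-1} = F_d/\phi + (-1)^d/\phi^d$, so $T(n) = n/\phi + E(n)$ with $E(n) := \sum_i (-1)^{d_i}/\phi^{d_i}$. The crux is the strict two-sided bound $E(n) \in (1/\phi - 1,\, 1/\phi)$: the upper bound follows from keeping only the even-index (positive) terms and summing the geometric tail $\sum_{j\ge 1}\phi^{-2j} = 1/\phi$, while the lower bound follows analogously from the odd-index (negative) terms and $\sum_{j \ge 1}\phi^{-(2j+1)} = 1/\phi^2 = 1 - 1/\phi$. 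Both inequalities are strict because any actual Zeckendorf partition is finite. Then $1/\phi - E(n) \in (0, 1)$ forces $\lfloor n/\phi + 1/\phi\rfloor = T(n) + \lfloor 1/\phi - E(n)\rfloor = T(n)$, which is the desired identity.

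The main obstacle is the uniform strict bound on the alternating error $E(n)$; the geometric-tail estimates rely on the observation that a same-parity index subset of $\{2,3,4,\ldots\}$ automatically respects the gap condition $d_{i+1} \ge d_i + 2$, which is what allows the parity-separated geometric series to serve as valid extremizers. If the sign bookkeeping proves cumbersome, a cleaner alternative is induction on $n$ via the Zeckendorf carry $F_d + F_{d+1} \to F_{d+2}$: verify the base case $n = 0$ and then that both $T(n)$ and $\lfloor (n+1)/\phi\rfloor$ increment identically under each successive unit increase of $n$, respecting the carries.
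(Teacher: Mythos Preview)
Your argument is correct and complete: the upper/lower block decomposition, the shift bijection $n\leftrightarrow U_n$, the addition identity $F_{d+k}=F_kF_{d-1}+F_{k+1}F_d$, and the Binet-based error estimate $E(n)\in(1/\phi-1,\,1/\phi)$ all check out, and together they yield $T(n)=\lfloor (n+1)/\phi\rfloor$ as you claim.

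There is nothing to compare against here, because the paper does not prove this theorem at all: it is merely restated from Griffiths \cite[Theorem~3.4]{MG1} and invoked as a black box. What you have supplied is a self-contained proof that the paper omits. Your route is quite direct, keeping the combinatorics of the Zeckendorf partition in view throughout and reducing everything to a single floor identity; the only delicate point is the strict two-sided bound on $E(n)$, and your parity-separated geometric-tail argument handles it cleanly (the key observation that a same-parity subset of indices automatically satisfies the gap condition is exactly what makes the extremizers valid). The inductive alternative you sketch at the end would also work but is unnecessary given that the analytic bound already closes the argument.
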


Another formula that we will need is taken from \cite[Page 335]{MG1}. For $k\ge 2$, the following formula gives the set of all integers with $F_k$ and $F_{k+2}$ in their Zeckendorf partitions:
\begin{align}\label{k2}
    Z(k, k+2) := \left\{F_{k+2}\left\lfloor \frac{n+\phi^2}{\phi}\right\rfloor + nF_{k+3} +F_k + j\, :\, 0\le j\le F_{k-1}-1, n\ge 0\right\}.
\end{align}

\begin{lem}\label{m1}For each $n\in \mathbb{N}$, $L(n) < L(n+1)$ if and only if $P(n+1)$ contains $F_2$.\end{lem}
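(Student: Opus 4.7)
My plan is to prove both directions by directly manipulating the Zeckendorf partition of $n+1$ and invoking the uniqueness from Theorem \ref{p1} to identify that of $n$.

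For the ``if'' direction, suppose $F_2 \in P(n+1)$. Write the Zeckendorf partition as $n+1 = F_2 + F_{c_2} + \cdots + F_{c_k}$, so that $c_2 \ge 4$ and $c_{i+1} \ge c_i + 2$ for all $i$. Subtracting $F_2 = 1$ gives
\[n \ =\ F_{c_2} + \cdots + F_{c_k},\]
a sum that still satisfies the hypotheses of Theorem \ref{p1}. By uniqueness, this is the Zeckendorf partition of $n$, so $L(n) = k-1 < k = L(n+1)$.

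For the ``only if'' direction, I prove the contrapositive: if $F_2 \notin P(n+1)$, then $L(n) \ge L(n+1)$. Let $F_{c_1}$ be the smallest summand in $P(n+1)$, so $c_1 \ge 3$, and write $n = (F_{c_1}-1) + F_{c_2} + \cdots + F_{c_k}$. Expand $F_{c_1} - 1$ via the standard identities
\[F_{2m} - 1 \ =\ F_3 + F_5 + \cdots + F_{2m-1}, \qquad F_{2m+1} - 1 \ =\ F_2 + F_4 + \cdots + F_{2m},\]
each verifiable by a routine induction. The largest index appearing in this expansion is $c_1 - 1$, and since $c_2 \ge c_1 + 2$, the full sum for $n$ satisfies the non-consecutivity condition and is therefore, by uniqueness, the Zeckendorf partition of $n$.

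A count of summands then closes the argument. If $c_1 \in \{3,4\}$, the expansion of $F_{c_1}-1$ is a single Fibonacci number, so $L(n) = L(n+1)$; if $c_1 \ge 5$, the expansion has at least two terms, giving $L(n) \ge L(n+1) + 1$. The main (minor) obstacle is the bookkeeping to confirm that the concatenated expansion is non-consecutive, but this reduces entirely to the index bound $c_1 - 1 < c_1 + 2 \le c_2$ noted above. In either subcase $L(n) \ge L(n+1)$, completing the contrapositive.
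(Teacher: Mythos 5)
Your proof is correct, but the harder direction is argued along a genuinely different route from the paper's. For the ``only if'' direction the paper works with $P(n)$ rather than $P(n+1)$: it assumes for contradiction that $F_2$ (and then $F_3$) lies in $P(n)$ and tracks the carries produced by adding $1$ (replacing $1+F_2$ by $F_3$, then $F_3+F_4$ by $F_5$, etc.) to conclude $L(n+1)\le L(n)$; having excluded $F_2$ and $F_3$ from $P(n)$, it appends $F_2$ to the partition of $n$ to get $F_2\in P(n+1)$. You instead take the contrapositive on $P(n+1)$ directly, subtract $1$ from the smallest summand $F_{c_1}$ ($c_1\ge 3$), and expand $F_{c_1}-1$ by the closed-form identities $F_{2m}-1=F_3+F_5+\cdots+F_{2m-1}$ and $F_{2m+1}-1=F_2+F_4+\cdots+F_{2m}$; the index bound $c_1-1\le c_2-3$ makes the concatenation a legal Zeckendorf partition, and counting terms gives $L(n)\ge L(n+1)$. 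Your version avoids the paper's slightly informal ``repeat the process'' carry argument and its ``similarly for $F_3$'' step, and it buys more: it shows $L(n)=L(n+1)$ exactly when $c_1\in\{3,4\}$ and $L(n)>L(n+1)$ exactly when $c_1\ge 5$, which is the content of Lemma \ref{m2}, and it even quantifies the drop $L(n)-L(n+1)$, which is relevant to Theorem \ref{minortheo1}. The paper's carry-propagation argument is more self-contained in that it needs no auxiliary identities, but yours is cleaner and reusable downstream.
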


\begin{proof}
The backward implication is straightforward. If $P(n+1)$ contains $F_2$, then the Zeckendorf partition of $n$ can be found by getting rid of $F_2$ in the partition of $n+1$. Hence, $P(n) < P(n+1)$. We prove the forward. Suppose, for a contradiction, that $F_2\in P(n)$. If $n = F_2$, then $1 + n = F_3$, which contradicts that $L(n) < L(n+1)$. If $n \neq F_2$, write $n = F_2 + F_{m_1} + F_{m_2} + \cdots + F_{m_k}$ as the Zeckendorf partition of $n$. We have
$1 + n = 1 + F_2 + F_{m_1} + F_{m_2} + \cdots + F_{m_k} = F_3 + F_{m_1} + F_{m_2} + \cdots + F_{m_k}$. If $m_1 - 3 \ge 2$, then we have the Zeckendorf partition of $n+1$ and $L(n) = L(n+1)$, a contradiction. If $m_1-3 = 1$, we replace $F_3 + F_{m_1}$ by $F_5$ and repeat process until we have a Zeckendorf partition; however, $L(n+1) < L(n)$, a contradiction. Therefore, $F_2\notin P(n)$. Similarly, we can show that $F_3\notin P(n)$ and so, the smallest summand $F_k$ in $P(n)$ must have $k\ge 4$. This implies $P(n+1)$ contains $F_2$, as desired. 
\end{proof}

\begin{cor}\label{c1} The set of all $n$ with $L(n) < L(n+1)$ is 
$$\{\lfloor{(n+\phi^2)/\phi}\rfloor+2n-1\,:\, n\ge 1\}.$$
\end{cor}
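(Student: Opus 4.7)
The plan is to combine Lemma \ref{m1} with Theorem \ref{MGtheorem} applied at $k=2$. By Lemma \ref{m1}, $L(n) < L(n+1)$ holds exactly when $F_2 \in P(n+1)$, i.e.\ when $n+1 \in Z(2)$. So the corollary reduces to writing $Z(2)$ explicitly and shifting the index by $1$.

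Next I would specialize formula \eqref{k1} to $k=2$. Since $F_2 = 1$, $F_3 = 2$, and $F_{k-1} = F_1 = 1$, the range $0 \le j \le F_{k-1} - 1$ collapses to $j=0$. Substituting these values into \eqref{k1} yields
\[
    Z(2) \;=\; \left\{\left\lfloor\frac{m+\phi^2}{\phi}\right\rfloor + 2m \,:\, m \ge 0\right\}.
\]

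Then, by Lemma \ref{m1}, $L(n) < L(n+1)$ iff there exists $m \ge 0$ with $n+1 = \lfloor (m+\phi^2)/\phi\rfloor + 2m$. I would check the endpoint: when $m=0$, $\lfloor \phi^2/\phi\rfloor = \lfloor\phi\rfloor = 1$, producing $n+1=1$, i.e.\ $n=0$, which lies outside the positive integers. For $m \ge 1$, solving for $n$ gives exactly $n = \lfloor (m+\phi^2)/\phi\rfloor + 2m - 1$, which is the set displayed in the corollary (after renaming $m$ to $n$).

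There is no real obstacle here; the only things to be careful about are the vanishing of the $j$-range at $k=2$ (so no spurious arithmetic progressions appear) and the boundary case $m=0$ that must be discarded to stay within the positive integers. Once these two bookkeeping items are handled, the characterization follows directly.
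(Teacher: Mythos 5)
Your proof is correct and follows essentially the same route as the paper: apply Lemma \ref{m1} to reduce the statement to membership of $n+1$ in $Z(2)$, then specialize Theorem \ref{MGtheorem} at $k=2$ and shift the index by $1$. You are in fact slightly more careful than the paper's one-line argument, since you explicitly note the collapse of the $j$-range and discard the $m=0$ boundary case (which would give $n=0$), both of which the paper leaves implicit.
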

\begin{proof}
By Theorem \ref{MGtheorem}, the set of all natural numbers with $F_2$ in their Zeckendorf partition is 
$$\left\{\left\lfloor{\frac{n}{\phi}+\phi}\right\rfloor+2n\,:\, n\ge 0\right\},$$
which implies our corollary.  
\end{proof}

\begin{lem}\label{m2} For each $n\in \mathbb{N}$, $L(n) > L(n+1)$ if and only if $P(n+1)$ contains none of $F_2, F_3, F_4$. \end{lem}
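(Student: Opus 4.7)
The plan is to adapt the strategy used in Lemma~\ref{m1}, now dealing with all three of $F_2, F_3, F_4$ simultaneously. The key identity I will use throughout is the telescoping
\begin{align*}
F_k - 1 \;=\; \begin{cases} F_2 + F_4 + \cdots + F_{k-1}, & k \text{ odd},\\ F_3 + F_5 + \cdots + F_{k-1}, & k \text{ even},\end{cases}
\end{align*}
both of which contain exactly $\lfloor (k-1)/2 \rfloor$ summands and are already in Zeckendorf form. Note that when $k\ge 5$, this count is at least $2$.

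For the backward direction I assume $F_2,F_3,F_4 \notin P(n+1)$, so the smallest summand in $P(n+1)$ is $F_k$ with $k\ge 5$. Then $n = (n+1) - 1$ is obtained by removing $F_k$ from $P(n+1)$ and replacing it by the expansion of $F_k - 1$ above. Because the next summand in $P(n+1)$ (if any) has index at least $k+2$, and the largest index appearing in the expansion of $F_k - 1$ is $k-1$, the resulting representation satisfies the Zeckendorf non-consecutivity condition and is therefore $P(n)$ itself. Counting summands gives $L(n) = L(n+1) - 1 + \lfloor (k-1)/2\rfloor \ge L(n+1)+1$, so $L(n) > L(n+1)$.

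For the forward direction I argue by contradiction, handling each of the three forbidden summands in turn. If $F_2 \in P(n+1)$, then Lemma~\ref{m1} immediately yields $L(n) < L(n+1)$, contradicting the hypothesis. If $F_3 \in P(n+1)$ but $F_2 \notin P(n+1)$, then the next summand of $P(n+1)$ has index at least $5$, and replacing $F_3$ with $F_3 - 1 = F_2$ produces a valid Zeckendorf partition of $n$ with the same number of summands, forcing $L(n) = L(n+1)$, again a contradiction. If $F_4 \in P(n+1)$ and $F_2,F_3 \notin P(n+1)$, then the next summand has index at least $6$, and the substitution $F_4 \mapsto F_4 - 1 = F_3$ again produces a valid Zeckendorf partition of $n$ with the same number of summands. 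Thus $P(n+1)$ must avoid all of $F_2,F_3,F_4$.

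The step most likely to demand care is verifying at each substitution that the resulting expression is still a legal Zeckendorf partition (i.e., no two consecutive Fibonacci indices appear), since the claim $L(n) = L(n+1)$ in the $F_3$ and $F_4$ cases depends on knowing that no further cascading is required. This is handled by the explicit lower bound on the next index in $P(n+1)$, which is exactly why excluding $F_2, F_3, F_4$ (rather than just $F_2$) is the correct threshold.
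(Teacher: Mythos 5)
Your proof is correct and takes essentially the same approach as the paper: Lemma~\ref{m1} handles $F_2$, the substitutions $F_3 \mapsto F_2$ and $F_4 \mapsto F_3$ handle the other two forbidden summands, and the backward direction rests on $F_{m_1}-1$ needing at least two summands when $m_1 \ge 5$. Your explicit telescoping expansion of $F_k - 1$ actually makes the backward step slightly more careful than the paper's, which only remarks that $F_{m_1}-1$ is not a Fibonacci number without exhibiting its Zeckendorf form.
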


\begin{proof} We prove the forward implication. We know that $F_2\notin P(n+1)$ due to Lemma \ref{m1}. Suppose, for a contradiction, that $F_3\in P(n+1)$. If $n+1 = F_3$, then $n = 1  = F_2$ and $L(n) = L(n+1)$, a contradiction. If $n+1 > F_3$, write $n+1 = F_3 + F_{m_1} + F_{m_2} + \cdots + F_{m_k}$ as the Zeckendorf partition of $n+1$. Then $n = F_2 + F_{m_1} + F_{m_2} + \cdots + F_{m_k}$ is the Zeckendorf partition of $n$. Again, $L(n) = L(n+1)$, a contradiction. Hence, $F_3\notin P(n+1)$. The same argument shows that $F_{4}\notin P(n+1)$. 

To prove the backward implication, write $n+1 = F_{m_1} + F_{m_2} + \cdots + F_{m_k}$ to be the Zeckendorf partition of $n+1$ and $m_1\ge 5$. Then $n = (F_{m_1}-1) + F_{m_2} + \cdots + F_{m_k}$. Because $F_{m_1}-1$ is not a Fibonacci number due to $m_1\ge 5$, the Zeckendorf partition of $F_{m_1}-1$ contains at least two summands. Therefore, $L(n) > L(n+1)$, as desired.  
\end{proof}

\begin{lem}\label{m3} For each $n\in \mathbb{N}$, $P(n+1)$ contains none of $F_2, F_3, F_4$ if and only if $P(n+3)$ contains $F_3$.
\end{lem}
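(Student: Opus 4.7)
The key observation is that $F_3 = 2$, so passing from $n+1$ to $n+3$ is exactly prepending (or removing) the summand $F_3$ in a Fibonacci representation. Both directions of the lemma then reduce to checking that this operation preserves the Zeckendorf non-consecutiveness condition, and the argument is parallel in structure to the proofs of Lemmas \ref{m1} and \ref{m2}.

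For the forward direction, I would assume $P(n+1)$ contains none of $F_2, F_3, F_4$ and write the Zeckendorf partition $n+1 = F_{m_1} + F_{m_2} + \cdots + F_{m_k}$ with $m_1 \geq 5$. Since $m_1 - 3 \geq 2$, the expression
\[ n+3 \ =\ F_3 + F_{m_1} + F_{m_2} + \cdots + F_{m_k} \]
is a legal Zeckendorf partition, and by the uniqueness in Theorem \ref{p1} it must equal the Zeckendorf partition of $n+3$; in particular $F_3 \in P(n+3)$.

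For the backward direction, I would suppose $F_3 \in P(n+3)$ and write $n+3 = F_3 + F_{m_1} + \cdots + F_{m_k}$ as its Zeckendorf partition, so that $m_1 \geq 5$ (or $k = 0$). Subtracting $F_3 = 2$ yields $n+1 = F_{m_1} + \cdots + F_{m_k}$, which is already in Zeckendorf form with smallest index $\geq 5$, so $P(n+1)$ contains none of $F_2, F_3, F_4$. The only edge case to handle is $k = 0$, which would force $n+3 = F_3 = 2$ and thus $n = -1 \notin \mathbb{N}$; it therefore does not affect the statement, and there is no real obstacle to the argument.
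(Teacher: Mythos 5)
Your proof is correct and takes essentially the same approach as the paper: write out the Zeckendorf partition, add or remove $F_3 = 2$, and invoke uniqueness to see that the resulting expression is the Zeckendorf partition of the other number. The only difference is that you explicitly dispose of the $k=0$ edge case, which the paper leaves implicit.
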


\begin{proof}
Suppose that $P(n+1)\cap \{F_2, F_3, F_4\} = \emptyset$. Then $n+1 = F_{m_1} + F_{m_2} + \cdots + F_{m_k}$ is the Zeckendorf partition of $n+1$ with $m_1 \ge 5$. Hence, $n+3 = F_3 + F_{m_1} + F_{m_2} + \cdots + F_{m_k}$ is the Zeckendorf partition of $n+3$ and so, $F_3\in P(n+3)$. 

Next, suppose that $P(n+3)$ contains $F_3$; write $n+3 = F_3 + F_{m_1} + F_{m_2} + \cdots + F_{m_k}$ is the Zeckendorf partition of $n+3$. Note that $m_1\ge 5$. Then $n+1 = F_{m_1} + F_{m_2}  + \cdots + F_{m_k}$ is the Zeckendorf partition of $n+1$ and so, $P(n+1)$ contains none of $F_2, F_3, F_4$.
\end{proof}

\begin{rek}\label{r1}\normalfont
By \eqref{k1}, the set of such all natural numbers $n$ such that $P(n+3)$ contains $F_3$ is 
$$\left\{2\left\lfloor\frac{n+\phi^2}{\phi}\right\rfloor+3(n-1)\, :\, n\ge 1\right\}.$$
\end{rek}

Due to Lemmas \ref{m2} and \ref{m3} and Remark \ref{r1}, we have the following corollary. 
\begin{cor}\label{c2}
 For each $n\in \mathbb{N}$, $L(n) > L(n+1)$ if and only if $$\left\{2\left\lfloor\frac{n+\phi^2}{\phi}\right\rfloor+3(n-1)\, :\, n\ge 1\right\}.$$
\end{cor}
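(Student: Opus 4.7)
The plan is to chain together the three results immediately preceding the corollary, since each supplies one link in the argument. First I would invoke Lemma~\ref{m2}, which equates the inequality $L(n) > L(n+1)$ with the combinatorial condition $P(n+1) \cap \{F_2, F_3, F_4\} = \emptyset$. Then I would apply Lemma~\ref{m3} to rewrite this condition as $F_3 \in P(n+3)$. So the task reduces to producing an explicit parametrization of the set of positive integers $n$ such that $n+3$ belongs to $Z(3)$, the set of integers whose Zeckendorf partition contains $F_3$.

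For the final step I would use Remark~\ref{r1}, which is itself an instance of Theorem~\ref{MGtheorem} at $k = 3$. Since $F_2 = 1$, only $j = 0$ contributes in \eqref{k1}, and substituting $F_3 = 2$, $F_4 = 3$ gives
\[
Z(3) \ =\ \left\{2\left\lfloor\frac{m+\phi^2}{\phi}\right\rfloor + 3m \,:\, m \ge 0\right\}.
\]
To describe $\{n \in \mathbb{N} : n+3 \in Z(3)\}$, I would subtract $3$ from each element of $Z(3)$ and keep only the positive results. The $m = 0$ term yields $2$, which would force $n = -1$ and is therefore discarded; for $m \ge 1$ one obtains $n = 2\lfloor (m+\phi^2)/\phi\rfloor + 3(m-1)$. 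Renaming the dummy variable $m$ to $n$ produces the set in the corollary's statement.

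The only genuinely delicate point is the reindexing: one must verify that the shift by $3$ between $P(n+1)$ and $P(n+3)$ is correctly absorbed into the parametrization, and that after dropping the $m = 0$ term the resulting map $m \mapsto 2\lfloor (m+\phi^2)/\phi\rfloor + 3(m-1)$ is a bijection from $\{m \ge 1\}$ onto the target set. Injectivity is immediate because the formula is strictly increasing in $m$, and surjectivity follows from the corresponding property of the formula for $Z(3)$ given by Theorem~\ref{MGtheorem}. A quick sanity check, such as $n = 4$ giving $n+3 = 7 = F_5 + F_3$, would confirm the bookkeeping.
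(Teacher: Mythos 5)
Your proof is correct and follows essentially the same route as the paper, which also derives the corollary by chaining Lemma~\ref{m2}, Lemma~\ref{m3}, and Remark~\ref{r1}. You additionally spell out the reindexing (substituting $k=3$ into \eqref{k1}, shifting by $3$, and discarding the $m=0$ term) that the paper leaves implicit in Remark~\ref{r1}, which is a welcome bit of extra care.
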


\begin{proof}[Proof of Theorem \ref{maintheo}]
The proof follows directly from Corollaries \ref{c1} and \ref{c2} and the identity $\phi - 1 = \frac{1}{\phi}$.
\end{proof}

\begin{proof}[Proof of Theorem \ref{minortheo1}]
Due to Lemma \ref{m1}, if $f(n) > 0$, then $P(n+1)$ contains $F_{2}$. Hence, we obtain the Zeckendorf partition of $n$ by discarding $F_2$ in the partition of $n+1$. Therefore, $f(n) = L(n+1) - L(n) = 1$. 

To prove that $f(n)$ can be equal to any negative integers, we use the identity $1 + F_2 + \cdots + F_{2k} = F_{2k+1}$, which holds for all $k\ge 1$. Let $n = F_{2k+1} - 1 = F_2 + F_4 + \cdots + F_{2k}$ and $n+1 = F_{2k+1}$. We have $f(n) = L(F_{2k+1}) - L(F_{2k+1}-1) = 1 - k$. This completes our proof. 
\end{proof}

The following lemma is used in the proof of Theorem \ref{maintheo3}. 

\begin{lem}\label{m10} For each positive integer $n$, the following are equivalent
\begin{enumerate}
    \item $F_2\in P(n+1)$ and $P(n+2)\cap \{F_2, F_3, F_4\} = \emptyset$,
    \item $\{F_2, F_4\}\subset P(n+1)$.
\end{enumerate}
\end{lem}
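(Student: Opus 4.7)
The plan is to prove the equivalence by directly manipulating the Zeckendorf partition and tracking how the smallest summand changes when we add~$1$. Both directions reduce to the observation that $F_2+1=F_3$ and $F_3+F_4=F_5$, which governs the carries that occur when we pass from $n+1$ to $n+2$.

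For the implication $(2)\Rightarrow(1)$, I would write the Zeckendorf partition of $n+1$ as
$$n+1 \;=\; F_2+F_4+F_{a_3}+\cdots+F_{a_k},$$
where $a_3\ge 6$ (possibly the sum reduces to just $F_2+F_4$). Then
$$n+2 \;=\; F_3+F_4+F_{a_3}+\cdots+F_{a_k} \;=\; F_5+F_{a_3}+\cdots+F_{a_k},$$
using $F_3+F_4=F_5$. This is not yet the Zeckendorf partition if $a_3=6$, in which case a further carry produces $F_7+F_{a_4}+\cdots$, and so on. The key point is that each carry only raises the smallest index in the partition, so after all carries terminate, the smallest summand in $P(n+2)$ has index $\ge 5$, which gives $P(n+2)\cap\{F_2,F_3,F_4\}=\emptyset$. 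Also $F_2\in P(n+1)$ is immediate from (2).

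For the implication $(1)\Rightarrow(2)$, I would argue by contradiction. Since $F_2\in P(n+1)$ and $n\ge 1$ forces $n+1\ge 2$, the Zeckendorf partition of $n+1$ has the form
$$n+1 \;=\; F_2+F_{a_2}+\cdots+F_{a_k}$$
with $a_2\ge 4$. Suppose $F_4\notin P(n+1)$, so $a_2\ge 5$. Then
$$n+2 \;=\; F_3+F_{a_2}+\cdots+F_{a_k}$$
is already a valid Zeckendorf partition, because $a_2-3\ge 2$. Thus $F_3\in P(n+2)$, contradicting the assumption that $P(n+2)\cap\{F_2,F_3,F_4\}=\emptyset$. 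Hence $a_2=4$, giving $\{F_2,F_4\}\subset P(n+1)$.

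The only mildly delicate point is the cascade of carries in the forward direction when $a_3=6$: one must verify that no matter how far the chain $F_5+F_6\mapsto F_7$, $F_7+F_8\mapsto F_9$, $\ldots$ propagates, the minimum index in the final Zeckendorf expansion of $n+2$ stays at least~$5$. This follows from the fact that each carry replaces two consecutive Fibonacci terms by one of higher index and never introduces a smaller summand, so the bound on the smallest index is preserved throughout the process. Once this observation is in place, the rest of the proof is just bookkeeping.
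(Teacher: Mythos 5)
Your proof is correct and follows essentially the same route as the paper: the forward direction rules out $F_4\notin P(n+1)$ by observing that the next summand would then have index $\ge 5$, forcing $F_3\in P(n+2)$; the backward direction performs the carry $F_2+F_4+1=F_5$ and notes that further carries only increase the smallest index. You simply spell out the carry cascade in more detail than the paper, which dismisses it with ``can be simplified further.''
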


\begin{proof}
Assume $(1)$. It suffices to show that $F_4\in P(n+1)$. Suppose otherwise. Then the next term in $P(n+1)$ (if any) is $F_{k}$, where $k\ge 5$. Hence, $F_3\in P(n+2)$, a contradiction. 

Assume $(2)$. We can write $n+1 = F_2 + F_4 + F_{m_1} + \cdots + F_{m_k}$ as the Zeckendorf partition of $n+1$. Then $n+2 = F_5 + F_{m_1} + \cdots + F_{m_k}$, which can be simplified further to have the Zeckendorf partition of $n+2$, which contains none of $F_2, F_3$, and $F_4$. 
\end{proof}

\begin{proof}[Proof of Theorem \ref{maintheo3}]
Suppose, for a contradiction, that $L(n) < L(n+1) < L(n+2)$ for some positive integer $n$. By Lemma \ref{m1}, both  $P(n+1)$ and $P(n+2)$ contain $F_2$. Write $n+1 = F_2 + F_{m_1} + \cdots + F_{m_k}$ as the Zeckendorf partition of $n+1$. Then $n+2 = (1 + F_2) + F_{m_1} + \cdots + F_{m_k}$. We can replace $1 + F_2$ by $F_3$. If $m_1 > 4$, then $n+2 = F_3 + F_{m_1} + \cdots + F_{m_k}$ is the Zeckendorf partition of $n+2$, which does not contain $F_2$. If $m_1 = 4$, then we can replace $F_3 + F_4$ by $F_5$ and repeat the process until we have a Zeckendorf partition, which does not contain $F_2$. In both cases, $F_2\notin P(n+2)$, a contradiction. Therefore, there is no $n$ such that $L(n) < L(n+1) < L(n+2)$.

Next, suppose, for a contradiction, that $L(n) > L(n+1) > L(n+2)$ for some positive integer $n$. By Lemma \ref{m2}, we know that $P(n+1)$ and $P(n+2)$ contains none of $F_2, F_3$, and $F_4$. However, if $n+1$ contains none of $F_2, F_3$, and $F_4$, then $n+2$ contains $F_2$, a contradiction. Hence, there is no $n$ such that $L(n) > L(n+1) > L(n+2)$.

To prove that $\{n\,:\, L(n) > L(n+1) < L(n+2)\} = S_2$, it suffices to prove that $L(n) > L(n+1)$ implies $L(n+1) < L(n+2)$. Assume $L(n) > L(n+1)$.  By Lemma \ref{m2}, we know that $P(n+1)$ contains none of $F_2, F_3$, and $F_4$. Therefore, $F_2\in P(n+2)$. By Lemma \ref{m1}, it follows that $L(n+1) < L(n+2)$. 

Finally, due to Lemmas \ref{m1} and \ref{m2}, the condition $L(n) < L(n+1) > L(n+2)$ is equivalent to two conditions $F_2\in P(n+1)$, and $P(n+2)$ contains none of $F_2, F_3$, and $F_4$. By Lemma \ref{m10}, we have $\{F_2, F_4\}\subset P(n+1)$. By \eqref{k2}, we know that $n$ is in
\begin{align*}
    \left\{3\left\lfloor\frac{n}{\phi}+\phi\right\rfloor + 5n\, :\, n\ge 0\right\}.
\end{align*}
\end{proof}


\ \\

\begin{thebibliography}{9999}
\bibitem{BDEMMTW1}
A. Best, P. Dynes, X. Edelsbrunner, B. McDonald, S. J. Miller, C. Turnage-Butterbaugh, and M. Weinstein, Benford behavior of Zeckendorf's decompositions, \textit{Fibonacci Quart.} \textbf{52} (2014), 35--46. 
\bibitem{BDEMMTW2}
A. Best, P. Dynes, X. Edelsbrunner, B. McDonald, S. J. Miller, C. Turnage-Butterbaugh, and M. Weinstein, Gaussian behavior of the number of summands in Zeckendorf's decompositions in small intervals, \textit{Fibonacci  Quart.} \textbf{52} (2014), 47--53.

\bibitem{BHLMT1} E. Boldyriew, J. Havilan, P. Lam, J. Lentfer, S. J. Miller, and Fernando Trejos Suarez, An Introduction to Completeness of Positive Linear Recurrence Sequences, preprint, 2020, \bburl{https://arxiv.org/pdf/2010.04071}.

\bibitem{BHLMT2} E. Boldyriew, J. Havilan, P. Lam, J. Lentfer, S. J. Miller, and Fernando Trejos Suarez, Completeness of Positive Linear Recurrence Sequences, preprint, 2020, \bburl{https://arxiv.org/pdf/2010.01655}.




\bibitem{B1}
J. Brown, Jr., Note on complete sequences of integers, \textit{Am. Math. Mon.} \textbf{68} (1961), 557--560.
\bibitem{B}
J. Brown, Jr., Zeckendorf's theorem and some applications, \textit{Fibonacci Quart.} \textbf{2} (1964), 163--168.
\bibitem{B2}
J. Brown, Jr., Unique representation of integers as sums of distinct Lucas numbers, \textit{Fibonacci Quart.} \textbf{7} (1969), 243--252.

\bibitem{CSH}
L. Carlitz, R. Scoville, and V. Hoggatt, Jr., Fibonacci representations, \textit{Fibonacci Quart.} \textbf{10} (1972), 1--28. 
\bibitem{Day}
D. E. Daykin,  Representation of Natural Numbers as Sums of Generalized Fibonacci Numbers, \textit{J. Lond. Math. Soc.} \textbf{35} (1960), 143--160.

\bibitem{DDKMMV}
P. Demontigny, T. Do, A. Kulkarni, S. J. Miller, D. Moon, and U. Varma, Generalizing Zeckendorf's theorem to $f$-decompositions, \textit{J. Number Theory} \textbf{141} (2014), 136--158. 
\bibitem{DFFHMPP}
R. Dorward, P. Ford, E. Fourakis, P. Harris, S. J. Miller, E. Palsson, and H. Paugh, Individual gap measures from generalized Zeckendorf decompositions, \textit{Unif. Distrib. Theory} \textbf{12} (2017), 27--36. 


\bibitem{FGNPT}
P. Filipponi, P. J. Grabner, I. Nemes, A. Peth\"o, and R. F. Tichy, Corrigendum
to: ``Generalized Zeckendorf expansions'', \textit{Appl. Math. Lett.}
\textbf{7} (1994), 25--26.

\bibitem{Fr}
A. S. Fraenkel, Systems of Numeration, \textit{Amer. Math. Monthly} \textbf{92} (1985), 105--114.


\bibitem{GTNP}
P. J. Grabner, R. F. Tichy, I. Nemes, and A. Peth\"o, Generalized Zeckendorf expansions, \textit{Appl. Math. Lett.} \textbf{7} (1994), 25--28.
\bibitem{MG1}
M. Griffiths, Fixed-term Zeckendorf representations, \textit{Fibonacci Quart.} \textbf{52} (2014), 331--335.
\bibitem{MG2}
M. Griffiths, The golden string, Zeckendorf representations, and the sum of a series, \textit{Amer. Math. Monthly} \textbf{118} (2011), 497--507.
\bibitem{HS}
E. Hart and L. Sanchis, On the occurrence of $F_n$ in the Zeckendorf decomposition of $nF_n$, \textit{Fibonacci Quart.} \textbf{37} (1999), 21--33. 
\bibitem{Ho}
V. Hoggatt, Jr., Generalized Zeckendorf theorem, \textit{Fibonacci Quart.} \textbf{10} (1972), 89--93.
\bibitem{K}
T. Keller, Generalizations of Zeckendorf’s theorem, \textit{Fibonacci Quart.} \textbf{10} (1972), 95--102.
\bibitem{KKMW}
M. Kologlu, G. Kopp, S. J. Miller, and Y. Wang, On the number of summands in Zeckendorf decompositions, \textit{Fibonacci Quart.} \textbf{49} (2011), 116--130.
\bibitem{L}
C. Lekkerkerker, Voorstelling van natuurlyke getallen door een som van getallen van Fibonacci, \textit{Bull. Belg. Math. Soc. Simon Stevin} \textbf{29} (1951--1952), 190--195.
\bibitem{ML}
R. Li and S. J. Miller, Central limit theorems for gaps of generalized Zeckendorf's decompositions, \textit{Fibonacci Quart.} \textbf{57} (2019), 213--230. 
\bibitem{MMMS} T. Martinez, C. Mizgerd, S. J. Miller, and C. Sun, Generalizing Zeckendorf's Theorem to Homogeneous Linear Recurrences, I, to appear in \textit{Fibonacci Quart.}.

\bibitem{MMMMS} T. Martinez, C. Mizgerd, S. J. Miller, J. Murphy, and C. Sun, Generalizing Zeckendorf's Theorem to Homogeneous Linear Recurrences, II, to appear in \textit{Fibonacci Quart.}.

\bibitem{MW1} S. Miller and Y. Wang, From Fibonacci Numbers to Central Limit Type Theorems, \textit{J. Combin. Theory Ser. A} \textbf{119} (2012), 1398--1413.

\bibitem{MW2} S. Miller and Y. Wang, Gaussian Behavior in Generalized Zeckendorf Decompositions, in M. B. Nathanson, ed., \textit{Combinatorial and Additive Number Theory, CANT 2011 and 2012}, Springer Proceedings in Mathematics \& Statistics, 2014, pp.\ 159--173.


\bibitem{Z}
E. Zeckendorf, Representation des nombres naturels par une somme de nombres de Fibonacci ou de nombres de Lucas, \textit{Bull. Soc. Roy. Sci. Liege} \textbf{41} (1972), 179--182.





\end{thebibliography}
\end{document}